\newcommand{\op}{\operatorname}
\newcommand{\diam}{sdiam}
\newcommand{\old}[1]{{}}
\newtheorem{defn}{Definition}
\newtheorem{thm}[defn]{Theorem}
\newtheorem{lem}[defn]{Lemma}
\newtheorem{cor}[defn]{Corollary}
\begin{document}

\title{The Steiner distance problem for large vertex subsets in the hypercube}

\author{\'Eva Czabarka}
\author{Josiah Reiswig}
\author{L\'aszl\'o Sz\'ekely}

\address{\'Eva Czabarka\\Department of Mathematics \\ University of South 
Carolina \\ Columbia SC 29212 \\ USA
\and Visiting Professor\\ Department of Mathematics and Applied Mathematics\\ 
University of Johannesburg\\
P.O. Box 524, Auckland Park, Johannesburg 2006\\South Africa}
\email{czabarka@math.sc.edu}


\address{Josiah Reiswig\\ Department of Mathematics \\ Anderson University
\\ Anderson SC 29621 \\ USA}
\email{jreiswig@andersonuniversity.edu}

\address{L\'aszl\'o Sz\'ekely\\Department of Mathematics \\ University of South 
Carolina \\ Columbia SC 29212 \\ USA
\and Visiting Professor\\ Department of Mathematics and Applied Mathematics\\ 
University of Johannesburg\\
P.O. Box 524, Auckland Park, Johannesburg 2006\\ South Africa}
\email{szekely@math.sc.edu}


\subjclass[2010]{Primary 05C12; secondary 05C05, 05C35, 05C69}

\keywords{hypercube, Steiner distance, domination}

\thanks{
The last two authors were  supported in part by the National Science Foundation 
contract DMS-1600811.}

\begin{abstract}
We find the asymptotic behavior of the Steiner k-diameter
of the $n$-cube if $k$ is large. Our main contribution is the lower bound, which
utilizes the probabilistic method.
\end{abstract}

\maketitle

\section{Introduction}
For a connected graph $G$ of order at least 2 and $S\subseteq V(G)$, the {\sl 
Steiner distance} $d(S)$ among the vertices of $S$ is the minimum size among all 
connected subgraphs whose vertex sets contain $S$. Necessarily, such a minimum
subgraph must be a tree and such a tree is called a \textit{Steiner tree}. The Steiner
distance was introduced by G. Chartrand, O.R. Oellermann, S. Tian and H.B. Zou 
\cite{definition}, and 
it has turned into a well-studied parameter of graphs. Tao Jiang, Zevi Miller, 
and Dan Pritikin \cite{fixedk} studied how large  the Steiner distance of $k$ 
vertices  can be in the $n$-dimensional hypercube $Q_n$ as $n\rightarrow 
\infty$, while Zevi Miller and Dan Pritikin \cite{weightklayer} gave near tight 
bounds for the Steiner distance of a layer, i.e. vertices with the same number 
of 1's, in the $n$-dimensional hypercube $Q_n$ as $n\rightarrow \infty$. For a 
given $2\leq k\leq n$, the \textit{Steiner $k$-diameter} of the $n$-cube, 
$sdiam_k(Q_n)$, is the maximum Steiner distance among all $k$ subsets of 
$V(Q_n)$.

In this note we give natural upper bounds for the Steiner distance of a {\sl 
large} vertex set in the hypercube. It turns out that even the second order term 
in this estimate is close to tight. With these bounds, we determine 
$sdiam_k(Q_n)$ asymptotically for large $k$.

\section{Upper Bound}

For the upper bound, we utilize connected dominating sets of $Q_n$. A set 
$S\subset V(Q_n)$ is a \textit{dominating set} of $Q_n$ if every vertex of $Q_n$ 
is either an element of $S$ or has a neighbor in $S$. The minimum size of all 
dominating sets is called the \textit{domination number} of $Q_n$ and is denoted 
$\gamma(Q_n)$. The \textit{connected domination number}, denoted by 
$\gamma_c(Q_n)$, is minimum size of all connected dominating sets.

In 1988, Kabatyanskii and Panchenko \cite{kp} showed 
$\lim_{n\rightarrow\infty}\frac{\gamma(Q_n)}{2^n/n}=1$.
In an upcoming paper, Griggs \cite{griggs} 
utilizes this result to show that 
$\lim_{n\rightarrow\infty}\frac{\gamma_c(Q_n)}{2^n/n}=1$.
We use this last result to develop an upper bound for the Steiner diameter
of subsets of $V(Q_n)$. 

\begin{lem}\label{ub}
Suppose that $S\subset V(Q_n)$. Then, 
$$
d(S)\leq |S|+\dfrac{2^n}{n}(1+o(1)).
$$ 
\end{lem}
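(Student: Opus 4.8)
The plan is to build an explicit connected subgraph containing $S$ by using a minimum connected dominating set of $Q_n$ as a connected backbone and then hanging the vertices of $S$ off of it one edge at a time. Let $D$ be a minimum connected dominating set, so that $|D| = \gamma_c(Q_n)$; by the result of Griggs quoted above, $|D| = \frac{2^n}{n}(1+o(1))$.

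First I would choose a spanning tree $T$ of the induced subgraph $Q_n[D]$. Such a tree exists because $D$ is connected, and it has exactly $|D| - 1$ edges. Next, because $D$ is a dominating set, every vertex $v \in S \setminus D$ has at least one neighbor $u_v \in D$; for each such $v$ I would add the single edge $v u_v$. The resulting subgraph $T'$ is connected, its vertex set contains $D \cup S \supseteq S$, and since each newly added vertex is distinct and joined by exactly one edge, $T'$ is again a tree.

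Counting edges then gives
$$
d(S) \leq |E(T')| = (|D| - 1) + |S \setminus D| \leq |S| + |D| - 1 = |S| + \frac{2^n}{n}(1+o(1)),
$$
which is the claimed bound.

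Since the deep input, namely $\gamma_c(Q_n) = \frac{2^n}{n}(1+o(1))$, may be assumed, there is no genuine obstacle here: the only facts to verify are that the pendant-edge additions preserve the tree structure and that $|S \setminus D| \leq |S|$, both of which are immediate. If anything, the conceptual \emph{hard part} is simply recognizing that a connected dominating set supplies exactly the right structure — a connected skeleton that every vertex of $Q_n$ lies within one edge of — so that the overhead beyond $|S|$ is governed by $\gamma_c(Q_n)$ and does not depend on $S$ itself.
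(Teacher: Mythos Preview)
Your argument is correct and is essentially the paper's own proof, only spelled out in more detail: the paper likewise starts from a minimum connected dominating set, connects each vertex of $S$ to it by a single edge, and arrives at the bound $d(S)\le |S|+\gamma_c(Q_n)-1$. Your explicit distinction between $S\cap D$ and $S\setminus D$ makes the edge count cleaner, but the approach and the final estimate coincide.
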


\begin{proof}
Begin with a minimum connected dominating set of $Q_n$.
Simply connect each of the
elements of $S$ to this connected dominating set. The resulting subgraph spans 
$S$ and contains at most $|S| + \gamma_c(Q_n) - 1$ edges. Using \cite{griggs}, 
we then have that
$
d(S)\leq |S|+\frac{2^n}{n}(1+o(1)). 
$
\end{proof}

\section{Lower Bound}
To bound the Steiner distance of large vertex subsets of $Q_n$ from below,
we partition the
vertices of the hypercube into two sets. Identifying each vertex of $Q_n$ into
a binary string of length $n$, we let vertices with an even number of 1's make up
the set of even vertices and denote this set by $\mathcal{E}_n$. 
Similarly, we let the vertices with an odd number of 1's make up
the set of odd vertices and denote this set by $\mathcal{O}_n$.
We refer to changing the value of the $i$'th entry of a binary 
string $v=v_0\cdots v_i\cdots v_n$ as ``flipping'' the $i$'th entry
of $v$. Given an entry $v_i$, we let $\bar{v}_i=1-v_i$. That is, $\bar{v_i}$
 is the flipped value of $v_i$. For the proof of Theorem~\ref{lb}, we use 
 probabilistic methods similar to those found in \cite{as}.

\begin{thm}\label{lb}
Suppose that $S \subset \mathcal{E}_n$, i.e., each vertex in 
$S$ contains an even number of 1's. Then,
$$
d(S)\geq |S|+\dfrac{|S|^2}{n2^n}-\frac{(n+1)}{2}.
$$
\end{thm}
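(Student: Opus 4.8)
The plan is to sidestep the probabilistic machinery entirely and instead exploit the bipartiteness of $Q_n$ together with a single double-counting step; this already yields a bound stronger than the stated one. Let $T$ be a minimum Steiner tree for $S$, so $T$ is a tree with $V(T)\supseteq S$ and $|E(T)|=d(S)$, hence $|V(T)|=d(S)+1$. Since $Q_n$ is bipartite with parts $\mathcal{E}_n$ and $\mathcal{O}_n$, every edge of $T$ joins an even vertex to an odd one. Write $A=V(T)\cap\mathcal{E}_n$ and $B=V(T)\cap\mathcal{O}_n$. Because $S\subseteq\mathcal{E}_n$ we have $S\subseteq A$, so every odd vertex of $T$ is a Steiner point and therefore $|B|\le |V(T)|-|S|=d(S)+1-|S|$.

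Next I would show that $B$ dominates $S$ and then count. Assuming $|S|\ge 2$ (the cases $|S|\le 1$ make the right-hand side nonpositive and are trivial), $T$ has at least two vertices, so every $v\in S$ has degree at least $1$ in $T$; as $v$ is even, each of its tree-neighbours lies in $B$. Thus every $v\in S$ has a $Q_n$-neighbour in $B$. Counting the $Q_n$-edges between $S$ and $B$ in two ways, and using that each odd vertex has exactly $n$ even neighbours, gives
\[
|S|\le \sum_{w\in B}|N_{Q_n}(w)\cap S|\le n\,|B|.
\]
Combining $|B|\ge |S|/n$ with the bound $|B|\le d(S)+1-|S|$ from the previous paragraph yields $d(S)\ge |S|-1+|S|/n$.

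Finally I would verify that this implies the stated inequality. Since $S\subseteq\mathcal{E}_n$ forces $|S|\le 2^{n-1}\le 2^n$, the difference between my bound and the target is
\[
\Big(|S|-1+\tfrac{|S|}{n}\Big)-\Big(|S|+\tfrac{|S|^2}{n2^n}-\tfrac{n+1}{2}\Big)=\tfrac{|S|}{n}\Big(1-\tfrac{|S|}{2^n}\Big)+\tfrac{n-1}{2}\ge 0,
\]
so the theorem follows. The conceptual crux is the parity observation that every tree-neighbour of an $S$-vertex is forced to be odd, which turns $B$ into a dominating set for $S$; everything after that is routine double counting. I expect the only real subtlety to be the bookkeeping of which vertices are forced to be Steiner points (and the harmless exclusion of $|S|\le 1$). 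If one instead insists on the probabilistic route suggested by the paper's setup, the analogous estimate replaces the exact inequality $|S|\le n|B|$ by an averaging argument over random odd vertices, which is lossier and accounts for the weaker factor $\tfrac{|S|^2}{n2^n}=\tfrac{|S|}{2n}\cdot\tfrac{|S|}{2^{n-1}}$ appearing in the statement.
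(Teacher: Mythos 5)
Your proof is correct for $n\ge 2$ and takes a genuinely different, more elementary route than the paper --- and it actually establishes a \emph{stronger} bound. The paper argues probabilistically: it pairs $T$ with a Steiner tree $\bar{T}$ of a parity-flipped copy $\bar{S}\subset\mathcal{O}_n$, notes that $\lambda_1(T)\cup\lambda_2(\bar{T})$ plus at most $n$ linking edges spans $S\cup\bar{S}$ (which forces at least $2|S|-1$ edges) for any parity-preserving automorphisms $\lambda_i$, and then averages $|E(\lambda_1(T))\cap E(\lambda_2(\bar{T}))|$ over a group $\Gamma$ of order $n2^{n-1}$ acting simply transitively on $E(Q_n)$ to find a pair with overlap at least $d(S)^2/(n2^{n-1})$; a bootstrapping computation then yields the stated inequality. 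You replace all of this with the observation that $B=V(T)\cap\mathcal{O}_n$ consists entirely of Steiner points and dominates $S$ inside $Q_n$, so $|S|\le n|B|$ and hence $d(S)\ge |S|+|S|/n-1$; your verification that this implies the theorem (using $|S|\le 2^{n-1}$) is correct. Your bound dominates the paper's for every admissible $|S|$: at $|S|=2^{n-1}$ it gives second-order term $2^{n-1}/n$ versus the paper's $2^{n-2}/n$, for $|S|=o(2^{n/2})$ the paper's bound degenerates to the trivial $d(S)\ge|S|-O(n)$ while yours keeps the $+|S|/n$ gain, and it is tight (take $S$ to be the $n$ neighbours of a fixed odd vertex, an independent set with $d(S)=n$). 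The one blemish is your parenthetical that the right-hand side is nonpositive when $|S|\le 1$: for $n=1$ and $|S|=1$ it equals $1/2$ while $d(S)=0$, so the theorem as stated actually fails in that degenerate case --- a defect of the statement itself, since the paper's own proof also breaks there at the step assuming $d(S)=|S|+x$ with $x>0$. For $n\ge 2$ your case check and the rest of the argument go through.
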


\begin{proof}
Suppose that $S$ is a subset of the set of even vertices of $Q_n$. Let $\bar{S}$
be some subset of the odd vertices which is the image of $S$ under
some automorphism of $Q_n$. That is, $S\subset \mathcal{E}_n$,
$\bar{S}\subset \mathcal{O}_n$, and $\bar{S}=\gamma(S)$ for some
$\gamma\in \op{Aut}(Q_n)$.
Such a subset exists. Indeed, consider the set of
all vertices in $S$ with the first entry flipped.  Since $S$ and $\bar{S}$ are
isomorphic, we have that $d(S)=d(\bar{S})$. Suppose that $T=(V(T),E(T))$ and 
$\bar{T}=(V(\bar{T}),E(\bar{T}))$ are Steiner trees of $S$ and $\bar{S}$, 
respectively. Naively, we have that $d(S\cup \bar{S})\geq 2|S|-1$. Furthermore, 
connecting $T$ and $\bar{T}$ with at most $n$ edges
yields a subgraph of $Q_n$ connecting $S\cup \bar{S}$. Hence, 
$$
d(S\cup \bar{S})\leq |E(T)\cup E(\bar{T})|+n.
$$
Putting these inqualities together and applying the principle of inclusion and 
exclusion, we have 
\begin{align*}
2|S|-1&\leq |E(T) \cup E(\bar{T})|+n\\
&=|E(T)|+|E(\bar{T})|-|E(T)\cap E(\bar{T})|+n\\
&=2d(S)-|E(T)\cap E(\bar{T})|+n,
\end{align*} 
which implies that 
$$
2d(S)-|E(T)\cap E(\bar{T})|\geq 2|S|-(n+1).
$$
Note that if $\lambda_1$ and $\lambda_2$ are automorphisms of
$Q_n$ which preserve the parity of their inputs, then the inequality above 
extends to
\begin{equation}
2d(S)-|E(\lambda_1(T))\cap E(\lambda_2(\bar{T}))|\geq 2|S|-(n+1),\label{probeq}
\end{equation}
where $\lambda_i(T)$ and $\lambda_i(\bar{T})$ are the images of $T$ and $\bar{T}$
under $\lambda_i$ for $1\leq i\leq 2$.

Let $\Gamma=\langle\alpha, \beta_{i,j}:1\leq 0<j\leq n-1 \rangle$ be the subgroup 
of the group of automorphisms of $Q_n$ generated by the automorphisms
\begin{align*}
\alpha:& v_0v_1\cdots v_{n-1}\mapsto v_1\cdots v_{n-1}v_0\\
\beta_{i,j}:& v_0v_1\cdots v_i \cdots v_j\cdots v_{n-1}\mapsto  v_0v_1\cdots 
\bar{v_i} \cdots \bar{v_j}\cdots v_{n-1}.
\end{align*}
In words, $\alpha$ shifts each entry of its input to the left by 1 (modulo $n$),
while $\beta_{i,j}$ flips only the values of the $i$'th and $j$'th entries of its input.
Note that each element of $\Gamma$ preserves the parity of its input. 
We now verify the following claim:\\

\noindent \textbf{Claim:} For any two edges $e_1,e_2\in E(Q_n)$, there exists a 
\textit{unique} element of $\lambda\in \Gamma$ such that $\lambda(e_1)=e_2$.  \\

Suppose that $e_1=ab$ and 
$e_2=uv$ where $a$ and $u$ are even vertices while $b$ and $v$ are odd vertices. 
Without loss of generality, we may assume that $a=\mathbf{0}$, the vertex of all 
zeros. This implies that the string $b$ contains a single 1. We shall first 
prove existence of an automorphism $\lambda\in \Gamma$ mapping $e_1$ to $e_2$.

Since $u\in \mathcal{E}_n$, using a composition of automorphisms of the form
$b_{i,j}$ we may map 
$uv$ to $\mathbf{0} \hat{v}$, where $\hat{v}$ has a single 1.
Then, using some power of the automorphism $\alpha$, we 
may then map the edge $\mathbf{0}\hat{v}$ to the edge $\mathbf{0} b=e_1$. Let 
$\lambda$ be these composition of autmorphisms in $\Gamma$.  

To show that this automorphism is unique, we show that $|\Gamma|=n2^{n-1}$.
Since $\alpha\circ\beta_{ij}=\beta_{i-1,j-1}\circ\alpha$ (where the indexes are taken modulo $n$),
any $\lambda\in\Gamma$ can be described as first applying an appropriate power of
$\alpha$ and then flipping an even number of digits. As we have $n$ choices for the
power of $\alpha$ and $2^{n-1}$ choices for the subset of digits we flip, 
$|\Gamma|=n2^{n-1}$. Since $Q_n$ has $n2^{n-1}$ edges, and any 
$\lambda\in\Gamma$ maps the edge $\mathbf{0}b$ to one of these
in such a way that $0$ gets mapped to the endvertex in $\mathcal{E}_n$,
and all edges of $Q_n$ will be the image of $\mathbf{0}b$ under some
$\lambda\in\Gamma$, the claim follows.

We now consider the experiment of selecting elements $\lambda_1,\lambda_2\in 
\Gamma$ independently with uniform probability, and applying them to $T$ and 
$\bar{T}$, respectively. Consider the random variable $X=|E(\lambda_1(T))\cap 
E(\lambda_2(\bar{T}))|$. For the expected value of $X$, $\mathbb{E}(X)$,
we have that 
$$
\max\limits_{\lambda_1,\lambda_2}\{|E(\lambda_1(T)\cap 
\lambda_2(\bar{T}))|\}\geq \mathbb{E}(X).
$$
Using our claim, we observe that 
\begin{align*}
\mathbb{E}(X)
&=\sum_{f\in E(Q_n)}P[(f\in \lambda_1(T)) \text{ and } (f\in \lambda_2(\bar{T}))]\\
&=\sum_{f\in E(Q_n)}\frac{|E(T)|}{n2^{n-1}} \cdot 
\frac{|E(\bar{T})|}{n2^{n-1}}\\
&=\frac{|E(T)|^2}{n2^{n-1}}\\
&=\frac{d(S)^2}{n2^{n-1}},
\end{align*}
which implies 
$$
\max\limits_{\lambda_1,\lambda_2}\{|E(\lambda_1(T))\cap
E(\lambda_2(\bar{T}))\}\geq \frac{d(S)^2}{n2^{n-1}}.
$$
Using $\lambda_1$ and $\lambda_2$ which achieve this maximum and
applying inequality \eqref{probeq}, we see that
$$
2d(S)-\frac{d(S)^2}{n2^{n-1}}\geq 2|S|-(n+1).
$$
We are going to bootstrap the calculation above. Assume without loss of generality that $d(S) = |S| + x$ for some $x>0$. So, 
\begin{align*}
2(|S|+x)-\frac{(|S|+x)^2}{n2^{n-1}}
&\geq 2|S|-(n+1)\\
2|S|+2x-\frac{|S|^2+2|S|x+x^2}{n2^{n-1}}
&\geq 2|S|-(n+1)\\
2x-\frac{2|S|x}{n2^{n-1}}+2|S|-\frac{|S|^2+x^2}{n2^{n-1}}
&\geq 2|S|-(n+1)\\
2x(1-\frac{|S|}{n2^{n-1}})
&\geq \frac{|S|^2+x^2}{n2^{n-1}}-(n+1)\\
x
&\geq \frac{|S|^2}{n2^{n}}-\frac{(n+1)}{2},
\end{align*}
and the result is proven.
\end{proof}

With these results in hand, we can determine the asymptotic growth 
of $sdiam_k(Q_n)$ as for large $k$. In particular, we can determine the first and 
second order terms if $k=\Omega(2^n)$, while we can determine the first order
term if $2^n/n=o(k)$.

\begin{cor}
If $k=k(n)$, then
\begin{enumerate}
\item If $k=\Omega(2^n)$, then $\diam_k(Q_n)=k+\Theta(2^n/n)$, and

\item If ${2^n/n}=o(k)$, then 
$\lim\limits_{n\rightarrow\infty}\frac{sdiam_k(Q_n)}{k}=1$.
\end{enumerate}
\end{cor}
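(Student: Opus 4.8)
The plan is to sandwich $\diam_k(Q_n)=\max_{|S|=k}d(S)$ between the upper bound of Lemma~\ref{ub} and a matching lower bound, exploiting that the upper bound holds for \emph{every} $S$ (hence for the maximizer), while for a lower bound it suffices to exhibit a single well-chosen $k$-set.

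For part (2) I would argue as follows. By Lemma~\ref{ub}, $\diam_k(Q_n)\le k+\frac{2^n}{n}(1+o(1))$; since $2^n/n=o(k)$ the second term is $o(k)$, so $\diam_k(Q_n)\le k(1+o(1))$. For the reverse bound, any connected subgraph containing a $k$-set has at least $k-1$ edges, so $d(S)\ge k-1$ for every $S$ with $|S|=k$, whence $\diam_k(Q_n)\ge k-1=k(1-o(1))$. Dividing by $k$ and letting $n\to\infty$ gives $\diam_k(Q_n)/k\to 1$. Note this argument needs no parity restriction and covers the whole range $2^n/n=o(k)$.

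For part (1) the upper estimate is again immediate from Lemma~\ref{ub}: $\diam_k(Q_n)\le k+\frac{2^n}{n}(1+o(1))=k+O(2^n/n)$. For the lower estimate, when $k\le 2^{n-1}=|\mathcal{E}_n|$ I would take $S$ to be any $k$ even vertices and apply Theorem~\ref{lb}, giving
\[
\diam_k(Q_n)\ge d(S)\ge k+\frac{k^2}{n2^n}-\frac{n+1}{2}.
\]
Since $k=\Omega(2^n)$, say $k\ge c2^n$, the middle term satisfies $\frac{k^2}{n2^n}\ge c^2\frac{2^n}{n}$, which dominates the lower-order term $\frac{n+1}{2}=o(2^n/n)$; hence $\diam_k(Q_n)\ge k+\Omega(2^n/n)$. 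Combining the two estimates yields $\diam_k(Q_n)=k+\Theta(2^n/n)$.

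The step I expect to be the main obstacle is the upper half of the range, $2^{n-1}<k\le 2^n$. Here no $k$-set can be entirely even, so Theorem~\ref{lb} — which only bounds subsets of $\mathcal{E}_n$ — does not directly apply. In fact the clean two-term statement cannot survive all the way to $k=2^n$: by monotonicity of $d$ one has $\diam_k(Q_n)\le d(V(Q_n))=2^n-1$, so for $k$ within $O(2^n/n)$ of $2^n$ the conclusion $\diam_k(Q_n)\ge k+\Omega(2^n/n)$ fails. I would therefore either restrict part (1) to $k\le 2^{n-1}$ (still a constant fraction of $2^n$, so the regime $k=\Omega(2^n)$ remains nonvacuous) or look for a separate lower bound for $2^{n-1}<k$ that the even-subset theorem does not supply; a crude degree-and-bipartiteness count on the Steiner tree only yields $d(S)\ge 2^{n-1}+\Omega(2^n/n)$, which is weaker than $k+\Omega(2^n/n)$ once $k$ exceeds $2^{n-1}$, so a genuinely new idea (or the restriction) is needed here.
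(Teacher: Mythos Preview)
Your argument follows the paper's almost exactly: sandwich $\diam_k(Q_n)$ between Lemma~\ref{ub} above and a lower bound obtained from a well-chosen $k$-set, using Theorem~\ref{lb} on an even set when $k\le 2^{n-1}$. For part~(2) you replace the paper's appeal to Theorem~\ref{lb} by the trivial bound $d(S)\ge k-1$; this is a genuine (if minor) simplification, since it needs no parity hypothesis and already suffices once $2^n/n=o(k)$.

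Where you diverge is in the range $2^{n-1}<k\le 2^n$, and here you are more careful than the paper. The paper treats this case by taking $S$ to contain all of $\mathcal{E}_n$ together with some odd vertices and then writes down the Theorem~\ref{lb} bound with $|S|=k$; but Theorem~\ref{lb} is stated only for $S\subset\mathcal{E}_n$, so that step is not justified as written. Your counterexample at the top of the range is decisive: $\diam_{2^n}(Q_n)=2^n-1$, so $\diam_k(Q_n)-k=-1$ there, and hence the assertion $\diam_k(Q_n)=k+\Theta(2^n/n)$ cannot hold uniformly over all $k=\Omega(2^n)$. In other words, the obstacle you flag is real, and the paper's one-line treatment of $k>2^{n-1}$ does not resolve it; reading part~(1) as a statement about the regime $k\le 2^{n-1}$ (which is what both proofs actually establish) is the honest fix.
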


\begin{proof}
If $k\leq 2^{n-1}$, let $S\subset V(Q_n)$ be a subset of the even vertices of 
size $k$. If $k>2^{n-1}$, let $S$ contain all even vertices and choose the 
remaining odd vertices randomly. Applying the bounds determined in Lemma~\ref{ub}
and Theorem~\ref{lb}, we see that 
$$
k+\frac{k^2}{n2^n}-\frac{n+1}{2}\leq d(S)\leq \diam_k(Q_n) \leq k+\frac{2^n}{n}(1+o(1)).
$$
If $k=\Omega(2^n)$, $\diam_k(Q_n)$ is bounded above and below
by $k+\Theta(2^n/n).$
Alternatively, if $2^n/n=o(k)$, we have $\diam(Q_n)=k(1+o(1))$, giving 
$\lim_{n\rightarrow\infty}\frac{\diam_k(Q_n)}{k}=1$.
\end{proof}


\begin{thebibliography}{10}
\bibitem{as} N.~Alon,  J.~H.~Spencer, {\em The Probabilistic Method}
(Second Edition, John Wiley and Sons, New York, 2000).

\bibitem{definition} G. Chartrand, O.R. Oellermann, S. Tian and H.B. Zou, 
Steiner distance in graphs, {\sl \v Casopis Pest. Mat.} {\bf 114} (1989) 
399--410.

\bibitem{griggs} Jerrold R. Griggs, Spanning trees and domination in hypercubes, 
{\sl arXiv e-prints} (2019). arXiv: 1905.13292. https://arxiv.org/abs/1905.13292.



\bibitem{kp} G. A. Kabatyanskii and V. I. Panchenko, Unit sphere packings and 
coverings of the
Hamming sphere, {\sl Problems of Inform. Transm.} (1988), 261-272.

\bibitem{weightklayer} Zevi Miller and Dan Pritikin, Applying a result of Frank 
and R\"odl to the construction
of Steiner trees in the hypercube, {\sl Discrete Math.} {\bf 131}(1994), 
183--194.

\bibitem{fixedk} Tao Jiang, Zevi Miller, and Dan Pritikin, Near optimal bounds 
for Steiner trees in
the hypercube, {\sl SIAM J. Comp.} {\bf 40}(2011)(5), 1340--1360.


\end{thebibliography}
\end{document}